\def\ve{\varepsilon}
\def\C{{\cal C}}
\def\R{I\!\!R}
\def\M{{\cal M}}
\def\N{{\cal N}}
\def\V{{\cal V}}
\def\W{{\cal W}}
\begin{document}

\title*{Lyapunov's theorem via Baire category}
\author{Marco Mazzola and Khai T. Nguyen}
\institute{Marco Mazzola \at Sorbonne Universit\'e, Institut de Math\'ematiques de Jussieu - Paris Rive Gauche, CNRS,
Case 247, 4 Place Jussieu, 75252 Paris, France
\email{marco.mazzola@imj-prg.fr}
\and Khai T. Nguyen \at Department of Mathematics, North Carolina State University \email{khai@math.ncsu.edu}}
%
%
\maketitle

\abstract*{
}

\abstract{
Lyapunov's theorem is a classical result in convex analysis, concerning the convexity of the range of nonatomic measures. Given a family of integrable vector functions on a compact set, this theorem allows to prove the equivalence between the range of integral values obtained considering all possible set decompositions and all possible convex combinations of the elements of the family. Lyapunov type results have several applications in optimal control theory: they are used to prove bang-bang properties and existence results without convexity assumptions. Here, we use the dual approach to the Baire category method in order to provide a ``quantitative'' version of such kind of results applied to a countable family of integrable functions.
}
\section{Introduction}
The use of Baire categories in the analysis of nonconvex differential inclusions started with the seminal paper by A. Cellina \cite{Cellina}. These methods were later developed and adapted to various problems involving nonconvex ordinary and partial differential inclusions, notably in a series of articles by F. S. De Blasi and G. Pianigiani (see e.g. \cite{DeBlPian} and the bibliography therein). It is now known, for example, that the set $S^{ext}$ of extremal solutions of a differential inclusion, associated to a Lipschitz continuous multifunction with nonempty, compact and convex images, is residual in the set of all solutions $S$, i.e. it contains the intersection of countably many open dense subsets of $S$.

The same problem has been more recently approached by A. Bressan \cite{B2} from a ``dual" point of view. The procedure is the following: introduce auxiliary functions $v$ belonging to some complete space $V$; associate to each $v\in V$ a nonempty subset $S^v\subseteq S$; finally, show that the set of functions $v\in V$ satisfying $S^v\subseteq S^{ext}$ is residual in $V$. An advantage of this approach with respect to the ``direct'' one is that it could work even in the case when $S^{ext}$ is not dense in $S$. For the differential inclusion problem mentioned above, this situation can appear when no Lipschitzianity assumptions are imposed on the multifunction.

The dual approach was employed in \cite{BMN} in order to derive an extension of the classical bang-bang theorem in linear control theory. In very broad terms, it was proved that for almost every $v$ in a space of auxiliary functionals, there is a unique control minimizing $v$ and steering the system between two given points; furthermore, this control arc takes values almost everywhere within the extremal points of the set of admissible controls.
The classical proof of the bang-bang principle is actually based on a Lyapunov type theorem (see \cite{Cesari}). This result can be stated as follows.
Consider a finite family of Lebesgue integrable functions $f_1,\ldots,f_m$ from a compact subset $K\subset \R^d$ to $\R^n$ and the simplex of $\R^m$
$$\Delta_m~\doteq~\left\{\zeta=(\zeta_1,\ldots,\zeta_m)\in\R^m~\Big|~\zeta_i\geq0\ \ \forall\,i=1,\ldots,m,\ \sum_{i=1}^m\zeta_i=1\right\}\,.$$
Denote by $\M(K,\Delta_m)$ the set of Lebesgue measurable functions from $K$ to $\Delta_m$. Then, for any $\theta=(\theta_1,\ldots,\theta_m)\in\M(K,\Delta_m)$ there exists a measurable partition $\{E_1,\ldots,E_m\}$ of $K$ such that
$$
\int_{E_1} f_1(x)\, dx+\ldots+\int_{E_m}  f_m(x)\, dx~=~\sum_{i=1}^{m}\int_{K}\theta_i(x) f_i(x)~dx\,.
$$
An alternative ``extremal'' formulation of this theorem is the following. Given $\bar\theta=(\bar\theta_1,\ldots,\bar\theta_m)\in\M(K,\Delta_m)$, denote
$$\alpha~\doteq~\int_K \bar\theta_1(x)\, f_1(x)\, dx+\ldots+\int_K \bar\theta_m(x)\, f_m(x)\, dx~\in~\R^n\,.$$
Let $\Delta_{m}^{ext}$ be the set of extreme points of $\Delta_m$. According to Lyapunov's theorem, the set 
$$
\mathcal{A}^{ext}_{\alpha}~\doteq~\left\{\theta\in \M\left(K,\Delta_m^{ext}\right)~\Big|~\sum_{i=1}^{m}\int_{K}\theta_i(x) f_i(x)~dx~=~\alpha\right\}
$$
is nonempty. In the present paper, we aim to provide an alternative proof of this result based on the Baire category method, implying besides that $\mathcal{A}^{ext}_{\alpha}$ is actually residual in the set
$$\left\{\theta\in \M\left(K,\Delta_m\right)~\Big|~\sum_{i=1}^{m}\int_{K}\theta_i(x) f_i(x)~dx~=~\alpha\right\}$$
in a ``dual" sense.

The equivalence between the range of integral values obtained considering all possible set decompositions and all possible convex combinations of given vector functions plays an important role in optimal control theory, that goes beyond the application to the bang-bang theorem. For instance, it can be used to derive existence theorems for optimal control problems without convexity assumptions (see e.g. \cite{Angell,Suryana}). 

\section{A dual approach to Lyapunov's theorem}
For any continuous function $v:K\to\R^m$, consider the constrained optimization problem
\begin{equation}\label{P1}
\displaystyle \mathrm{Minimize}_{\theta\in\mathcal{A}_{\alpha}}~\int_{K}\theta(x)\cdot v(x)dx
\end{equation}
over the set
\begin{equation}\label{C1}
\mathcal{A}_{\alpha}~\doteq~\left\{\theta\in\M(K,\Delta_m)~\Big|~\sum_{i=1}^{m}\int_{K}\theta_i(x) f_i(x)~dx~=~\alpha\right\}\,,
\end{equation}
where $\theta(x)\cdot v(x)\doteq \sum_{i=1}^m\theta_i(x)v_i(x)$ denotes an inner product. It is clear that (\ref{P1})--(\ref{C1}) admits at least a solution. Indeed, since $\theta_m=1-\sum_{i=1}^{m-1}\theta_i$, the problem (\ref{P1})--(\ref{C1}) is equivalent to 
\begin{equation}\label{P1-1}
\displaystyle \mathrm{Minimize}_{\tilde{\theta}\in\mathcal{B}}~\int_{K}\sum_{i=1}^{m-1}\tilde{\theta}_i(x)\, (v_i(x)-v_m(x))dx
\end{equation}
over the set
\begin{eqnarray}\label{C1-1}
\mathcal{B}~\doteq~\Big\{\tilde{\theta}&\in&{\bf L}^{\infty}(K,\R^{m-1})~\Big|~\tilde{\theta}_i(x)\geq0\ \ \forall\,i=1,\ldots,m-1,\ \sum_{i=1}^{m-1}\tilde{\theta}_i(x)\leq1,\ a.e.\ x\in K,\nonumber\\
&&\qquad\quad\qquad\sum_{i=1}^{m-1}\int_{K}\tilde{\theta}_i(x) \big(f_i(x)-f_m(x)\big)~dx~=~\alpha-\int_{K}f_m(x)~dx\Big\}\,.
\end{eqnarray}
Thanks  to Alaoglu's theorem, for every sequence $(\tilde{\theta}^n)_{n= 1}^\infty\subset \mathcal{B}$, there exists a subsequence $(\tilde{\theta}^{n_k})_{k= 1}^\infty$  converging weakly* to some $\tilde{\theta}\in {\bf L}^{\infty}(K,\R^{m-1})$ satisfying $\|\tilde{\theta}\|_{{\bf L}^{\infty}(K,\R^{m-1})}\leq 1$. Hence  
\begin{equation}\label{w-c}
\lim_{n_k\to+\infty}~\int_{K} \sum_{i=1}^{m-1}[\tilde{\theta}_i^{n_k}(x)-\tilde{\theta}_i(x)] w_i(x)~dx~=~0\qquad\forall \,w\in {\bf L}^1(K,\R^{m-1})\,
\end{equation}
yields
$$\sum_{i=1}^{m-1}\int_{K}\tilde{\theta}_i(x) \big(f_i(x)-f_m(x)\big)~dx~=~\alpha-\int_{K}f_m(x)~dx\,.
$$
Since $\sum_{i=1}^{m-1}\tilde{\theta}^{n_k}_i(x)\leq1$ for a.e. $x\in K$ and $\tilde{\theta}^{n_k}_i(x)\geq 0$ for a.e $x\in K$ and any $i\in \{1,2,...,m-1\}$,  by a contradiction argument  one obtains from (\ref{w-c}) that $\tilde{\theta}$ satisfies the same properties. Therefore, the set $\mathcal{B}$ is weakly*-compact in ${\bf L}^{\infty}(K,\R^{m-1})$ and it yields the existence of solutions to (\ref{P1-1})--(\ref{C1-1}).
\quad\\
\quad\\
Let's define 
\begin{equation}\label{W}
\mathcal{V}_{\alpha}~\doteq~\left\{ v\in\C(K,\R^m)~|~ (\ref{P1})-(\ref{C1})~has~a~unique~solution\right\}\,.
\end{equation}
Here, $\C(K,\R^m)$ is the space of continuous function on $K$ with values in $\R^m$. Our main result is stated as follows.
\begin{theorem}\label{MT1}
$\mathcal{V}_{\alpha}$ is a residual subset of $\mathcal{C}(K,\R^m)$, i.e. it contains the intersection of countably many open dense subsets of $\mathcal{C}(K,\R^m)$. Moreover, for any $v\in\mathcal{V}_{\alpha}$, the unique optimal solution $\theta^*$ takes values in $Ext\,(\Delta_m)$ almost everywhere in the compact set $K$.
\end{theorem}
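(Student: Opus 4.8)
The plan is to recognize $\V_\alpha$ as the set of Gateaux differentiability points of the optimal‑value function and to obtain residuality from the separability of $\C(K,\R^m)$ via Mazur's theorem, which is itself a Baire category statement. Define
$$\Phi(v)~\doteq~\min_{\theta\in\mathcal{A}_\alpha}\int_K\theta(x)\cdot v(x)\,dx,\qquad v\in\C(K,\R^m)\,.$$
Since $\sum_{i}\theta_i\equiv1$ one checks $|\Phi(v)-\Phi(v')|\le \mathrm{meas}(K)\,\|v-v'\|_\infty$, so $\Phi$ is Lipschitz; being an infimum of linear functionals of $v$, it is concave. I write $\mathrm{Sol}(v)\subseteq\mathcal{A}_\alpha$ for the set of minimizers, which is nonempty (as shown above), convex, and weak*-compact.

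First I would prove the one implication that is actually needed: if $\Phi$ is Gateaux differentiable at $v$, then $\mathrm{Sol}(v)$ is a singleton. Indeed, if $\theta_1,\theta_2\in\mathrm{Sol}(v)$, feasibility gives for every $w\in\C(K,\R^m)$ and $t>0$ the bound $\Phi(v+tw)-\Phi(v)\le t\int_K\theta_j\cdot w\,dx$ for $j=1,2$; letting $t\to0^+$ and applying the same estimate to $-w$, differentiability forces $\int_K\theta_1\cdot w\,dx=\int_K\theta_2\cdot w\,dx$ for all continuous $w$, whence $\theta_1=\theta_2$ a.e. By Mazur's theorem, a continuous convex (here concave) function on the separable Banach space $\C(K,\R^m)$ is Gateaux differentiable on a dense $G_\delta$ set, and this set is contained in $\V_\alpha$, which is therefore residual in the stated sense. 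As an alternative, more constructive route one can metrize the weak* topology on $\mathcal{A}_\alpha$ by a metric $d$, observe that $v\mapsto\mathrm{Sol}(v)$ is weak*-upper semicontinuous so that $D(v)\doteq\mathrm{diam}_d\,\mathrm{Sol}(v)$ is upper semicontinuous and each $\{D<1/n\}$ is open, and then establish density of these sets by perturbing $v$ so as to break all ties in the associated Lagrangian minimization.

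For the extremality statement, fix $v\in\V_\alpha$ with unique minimizer $\theta^*$. If $\theta^*=\tfrac12(\theta_1+\theta_2)$ with $\theta_1,\theta_2\in\mathcal{A}_\alpha$, then linearity of the cost together with minimality forces $\theta_1,\theta_2\in\mathrm{Sol}(v)=\{\theta^*\}$, so $\theta^*$ is an extreme point of $\mathcal{A}_\alpha$. It then remains to show that every extreme point of $\mathcal{A}_\alpha$ is $Ext(\Delta_m)$-valued a.e.\ — the Lyapunov-type heart of the matter. Were $S\doteq\{x:\theta^*(x)\notin Ext(\Delta_m)\}$ of positive measure, one could select a bounded measurable variation $\xi$ supported on $S$ with $\sum_i\xi_i\equiv0$ and $\theta^*\pm\xi\in\M(K,\Delta_m)$; splitting $S$ into $n+1$ subsets of positive measure (nonatomicity of Lebesgue measure) yields $n+1$ such variations whose integrals against $(f_1,\ldots,f_m)$ lie in $\R^n$, hence a nontrivial combination preserving $\sum_i\int_K\xi_i f_i\,dx=0$, so that $\theta^*\pm\xi\in\mathcal{A}_\alpha$, contradicting extremality.

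The hard part will be the density step if one insists on the constructive, ``quantitative'' formulation rather than the abstract shortcut: the Lagrange multiplier governing the minimization depends on $v$, and it must be controlled while $v$ is perturbed to render the pointwise argmin a singleton a.e. The Mazur route avoids this coupling entirely, at the price of losing explicit moduli; the nonatomic variation argument for extremality is classical but needs to be carried out with measurable selections.
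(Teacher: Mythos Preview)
Your argument is correct but follows a genuinely different route from the paper. For residuality you invoke Mazur's theorem: the value function $\Phi$ is concave and Lipschitz on the separable space $\C(K,\R^m)$, hence Gateaux differentiable on a dense $G_\delta$, and you show that differentiability at $v$ forces uniqueness of the minimizer; for the second assertion you observe that the unique minimizer must be an extreme point of $\mathcal A_\alpha$ and then run the classical nonatomic splitting argument (partition into $n{+}1$ pieces of positive measure, use linear dependence in $\R^n$) to see that extreme points of $\mathcal A_\alpha$ are $\mathrm{Ext}(\Delta_m)$-valued. The paper instead builds a bespoke Baire lemma (Lemma~\ref{lemma}): for each integrable $g$, the set $\W^g$ of continuous $w$ with $\hbox{meas}\{x:w(x)=\lambda\cdot g(x)\}=0$ for all $\lambda\in\R^n$ is residual, proved by an explicit open-and-dense construction; it then shows via a Lagrange-multiplier/separation argument that any non-extremal minimizer (hence also any non-unique one, by averaging) forces $v_{i_1}-v_{i_2}=\lambda\cdot(f_{i_1}-f_{i_2})$ on a set of positive measure, so that $\V_\alpha\supset\bigcap_{i_1\ne i_2}\{v:v_{i_1}-v_{i_2}\in\W^{f_{i_1}-f_{i_2}}\}$. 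Your approach is shorter and cleanly decouples the two conclusions; the paper's buys an explicit, concrete residual subset expressed in terms of the data $f_i$ (the ``quantitative'' content promised in the abstract) and, more to its stated purpose, derives the Lyapunov-type conclusion entirely from the hand-built Baire lemma rather than from the standard nonatomic linear-dependence trick that your extremality step re-imports.
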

The main ingredient in the proof of the above theorem is  the following lemma.
\begin{lemma}\label{lemma}
Let $g: K\to\R^n$ be a Lebesgue integrable function. Then the set $\mathcal{W}^{g}$ of continuous functions $w\in\mathcal{C}(K,\R)$ such that
\begin{equation}\label{meas0}
\hbox{meas}\Big(\bigl\{x\in K~|~ w(x)=\lambda\cdot g(x) \bigr\}\Big)~=~0\qquad\hbox{for all}~~\lambda\in\R^n
\end{equation}
is residual in $\C(K,\,\R)$.
\end{lemma}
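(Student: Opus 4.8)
The plan is to prove the complement of $\W^g$ is of first category, by writing it as a countable union of closed sets with empty interior. The first move is an elementary reduction that tames the uncountable family of conditions indexed by $\lambda\in\R^n$: for positive integers $j,p$ set
$$F_{j,p}~\doteq~\Big\{w\in\C(K,\R)~\Big|~\exists\,\lambda\in\overline{B}(0,p)\ \hbox{with}\ \hbox{meas}\bigl(\{x\in K~|~w(x)=\lambda\cdot g(x)\}\bigr)\geq 1/j\Big\}.$$
Any $w\notin\W^g$ admits a $\lambda$ and a positive lower bound for $\hbox{meas}(\{w=\lambda\cdot g\})$, hence lies in some $F_{j,p}$; thus $\C(K,\R)\setminus\W^g=\bigcup_{j,p}F_{j,p}$, and it suffices to show each $F_{j,p}$ is closed and nowhere dense.

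Closedness is where the compactness of the ball $\overline{B}(0,p)$, rather than all of $\R^n$, is used. Given $w_k\to w$ uniformly with witnesses $\lambda_k\in\overline{B}(0,p)$, I would pass to a subsequence $\lambda_k\to\lambda\in\overline{B}(0,p)$ and note that, since $w_k(x)\to w(x)$ and $\lambda_k\cdot g(x)\to\lambda\cdot g(x)$ pointwise, the set $\limsup_k\{w_k=\lambda_k\cdot g\}$ is contained in $\{w=\lambda\cdot g\}$. The reverse Fatou lemma, valid because $K$ has finite measure, then yields $\hbox{meas}(\{w=\lambda\cdot g\})\geq\limsup_k\hbox{meas}(\{w_k=\lambda_k\cdot g\})\geq 1/j$, so $w\in F_{j,p}$.

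The core of the argument is that each $F_{j,p}$ has empty interior, i.e. any $w$ is a uniform limit of functions $w'$ with $\hbox{meas}(\{w'=\lambda\cdot g\})<1/j$ for every $\lambda\in\overline{B}(0,p)$. Two structural facts guide the construction. On $\{g=0\}$ the condition $w'=\lambda\cdot g$ reduces to $w'=0$ for every $\lambda$, so it is enough to force $w'\neq 0$ a.e. there, a single genericity requirement. On $\{g\neq 0\}$ a Tonelli computation shows the bad set of $\lambda$ is automatically negligible for a fixed $w'$: integrating $\lambda\mapsto\hbox{meas}(\{x:g(x)\neq0,\ w'(x)=\lambda\cdot g(x)\})$ over $\R^n$ and exchanging the order of integration produces, for each $x$ with $g(x)\neq 0$, the Lebesgue measure of the hyperplane $\{\lambda:\lambda\cdot g(x)=w'(x)\}$, which is $0$; hence $\{\lambda:\hbox{meas}(\{w'=\lambda\cdot g,\ g\neq0\})>0\}$ is Lebesgue-null in $\R^n$.

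The remaining, and principal, difficulty is to upgrade this \emph{null} set of exceptional $\lambda$ into the \emph{uniform} bound over $\overline{B}(0,p)$, for a null set of bad $\lambda$ may well be nonempty; indeed a degenerate $g$ (constant, or valued in a proper subspace, on a set of positive measure) can make an entire positive-dimensional family of $\lambda$ bad for the unperturbed $w$. To break such concentrations I would perturb along a fixed $\phi\in\C(K,\R)$ that is nowhere zero and has Lebesgue-null level sets (for instance $\phi(x)=C+x_1$ with $C$ large), setting $w_t=w+t\phi$ with $|t|$ small. Since $\phi$ never vanishes, for each fixed $\lambda$ the sets $\{w_t=\lambda\cdot g\}$ are pairwise disjoint in $t$, so at most $j\cdot\hbox{meas}(K)$ values of $t$ can create a tie of measure $\geq 1/j$; and since $\phi$ has null level sets, on any region where $g$ and $w$ were constant the term $t\phi$ spreads the values and destroys the offending atom of the joint law $(g,w_t)$ for all $\lambda$ simultaneously. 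I would then select $t$ by a nested Baire argument inside this one-parameter family: the set of bad $t$ in $[-\ve,\ve]$ is closed by the same reverse-Fatou argument as above, and a good $t$, giving $\hbox{meas}(\{w_t=\lambda\cdot g\})<1/j$ for \emph{all} $\lambda\in\overline{B}(0,p)$, lies arbitrarily close to $0$; taking $w'=w_t$ places $w'\notin F_{j,p}$. I expect the delicate point to be precisely the density of good $t$: ruling out that, along some interval of shifts, a graph $t\mapsto\lambda(t)$ of witnesses with overlapping positive-measure tie-sets persists — this is exactly where the disjointness and the null-level-set property of $\phi$ must be combined most carefully. Granting it, $\W^g=\bigcap_{j,p}\bigl(\C(K,\R)\setminus F_{j,p}\bigr)$ is a countable intersection of open dense sets, hence residual.
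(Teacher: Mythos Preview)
Your decomposition $\C(K,\R)\setminus\W^g=\bigcup_{j,p}F_{j,p}$ is equivalent to the paper's (which works with the open complements $\W^g_{\ve,N}$), and your closedness argument via reverse Fatou is in fact tidier than the paper's openness proof, which proceeds through Lusin approximations of $g$ and a compactness covering of $[-N,N]^n$.

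The genuine gap is in the nowhere-dense step. A one-parameter perturbation $w_t=w+t\phi$ need not ever leave $F_{j,p}$, and your proposed $\phi(x)=C+x_1$ already fails in simple cases. Take $K=[0,1]$, $n=2$, $g(x)=(1,x)$, and $w\equiv 0$. Then $\phi=C\cdot g_1+g_2=\mu\cdot g$ with $\mu=(C,1)$, and for every $t$ one has
\[
\{x:\,w_t(x)=\lambda\cdot g(x)\}~=~\{x:\,w(x)=(\lambda-t\mu)\cdot g(x)\},
\]
so the witness simply translates to $\lambda(t)=t\mu$ and $\hbox{meas}(\{w_t=\lambda(t)\cdot g\})=1$ for all $t$. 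More generally, whenever $\phi$ coincides with some $\mu\cdot g$ on a set of measure $\geq 1/j$, the obstruction survives on that set. Your Fubini observations show only that for each fixed $\lambda$ the bad $t$'s are null; this does not exclude a continuous selection $t\mapsto\lambda(t)$ of witnesses, which is exactly what happens above. The ``disjointness and null-level-set'' properties of $\phi$ that you invoke do not by themselves prevent this.

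The paper supplies the missing idea by injecting at least $n+1$ independent degrees of freedom rather than one. After reducing via Lusin to continuous $g$ and assuming $K=[0,1]^d$, it approximates $\tilde w$ by a function that is piecewise constant on a grid of $m^d\geq n+1$ cells; the vector of values $y\in(\R^m)^d$ is chosen from the dense set where no $n+1$ of the $y_j$ lie on a common graph $\lambda\mapsto(\lambda\cdot g(x_{j_1}),\ldots,\lambda\cdot g(x_{j_{n+1}}))$. Since each such constraint is a proper linear subspace of $(\R^m)^d$, the bad set is a finite union of hyperplanes, hence nowhere dense. This simultaneous genericity in $m^d$ directions is what defeats all $\lambda$ at once and yields the uniform bound $\hbox{meas}(\{w=\lambda\cdot g\})\leq n\,h^d<\ve/2$. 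Your scheme could in principle be repaired by passing to an $(n+1)$-parameter family of perturbations chosen transversally to $g$, but as written a single direction is insufficient.
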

\begin{proof}
For every positive integer $N$ and every 
$\ve>0$, call $\W^{g}_{\ve,N}$ the set of all $w\in \C(K,\,\R)$ such that
\begin{equation}\label{mease}
\hbox{meas}\Big(\big\{x\in K~|~ w(x)~=~\lambda \cdot g(x)
\big\}\Big)~<~\ve
\end{equation}
whenever $\lambda\in [-N,N]^n$.
The Lemma is proved once we show that, for every $\ve$ and $N$, $\W^{g}_{\ve,N}$ is open and dense in $\C(K;\,\R)$.
\quad\\
\quad\\
\noindent
{\bf 1.} We begin by proving that $\W^{g}_{\ve,N}$ is open.
Fix $w\in  \W^{g}_{\ve,N}$. For any $\lambda\in [-N,N]^n$, define
\begin{equation}\label{vel}
\ve_\lambda~\doteq~\ve-\hbox{meas}\Big(\bigl\{x\in K~|~ w(x)~=~\lambda \cdot g(x)\bigr\}
\Big)~>~0\,.
\end{equation}
Using  Lusin's theorem, there exists a continuous function $g_\lambda:K\mapsto\R^n$ such that
\begin{equation}\label{El}
\hbox{meas}\Big(\bigl\{ x\in K~|~g_\lambda(x)\not= g(x)\bigr\}\Big)~<~\ve_\lambda/4\,.
\end{equation}
Consider the compact set of $\R^n$
$$
E_\lambda~\doteq~\big\{x\in K~|~ w(x)~=~\lambda\cdot g_\lambda(x)\big\}\,.
$$
By the regularity properties of Lebesgue measure,
there exists a relatively open set $O_\lambda\subset K$ such that 
\begin{equation}\label{Ol} E_\lambda~\subseteq~O_\lambda\quad\mathrm{and}\quad \hbox{meas} (O_\lambda \backslash E_\lambda)~
<~{\ve_\lambda \over  2}\,.
\end{equation}
By the continuity of $g_{\lambda}$ and $w$, one has
$$
\min_{x\in K\backslash O_\lambda} \Big|w(x)-\lambda\cdot g_\lambda(x)\Big|~\doteq~\delta_{\lambda}~>~0\,.
$$
For any function $\tilde w \in\mathcal{C}(K,\R)$ such that  
$$\|\tilde w-w\|_{\infty}~=~\sup_{x\in K}~|\tilde{w}(x)-w(x)|~<~r_\lambda~\doteq~{\delta_\lambda\over 3\max\{1,\|g_\lambda\|_{\infty}\}}\,,$$
it holds
$$
\Big|\tilde w(x)-\lambda\cdot g_\lambda(x)\Big|~>
~{2\over 3}\delta_\lambda\qquad\forall x\in K\setminus O_\lambda\,.
$$
In turn, if $|\tilde \lambda-\lambda|~<~r_\lambda$, this implies
\[
\Big|\tilde w(x)-\tilde\lambda\cdot g_\lambda(x)\Big|~>~{\delta_\lambda\over 3}~>~0\qquad\forall x\in K\backslash O_\lambda
\]
and it yields
\begin{equation}\label{del}
\hbox{meas}\Big(\bigl\{x\in K~|~ \tilde w(x)~=~\tilde\lambda\cdot g_\lambda(x)\bigr\}\Big)~\leq~\hbox{meas}\,(O_\lambda)\,.
\end{equation} 
By (\ref{vel}), (\ref{El}), (\ref{Ol}) and (\ref{del}),  if   
\begin{equation}\label{wtl}
\|\tilde w - w\|_{\infty}
~< ~{r_\lambda}\qquad\hbox{and} \qquad |\tilde \lambda-\lambda|~<~
{r_\lambda}\,,
\end{equation}
then it holds
\begin{eqnarray}\label{nep}
\hbox{meas}\Big(\bigl\{x\in K&|& \tilde w(x)~=~\tilde\lambda\cdot
g(x)\bigr\}\Big)\\\nonumber
~&<&~\hbox{meas}\Big(\bigl\{x\in K~|~ \tilde w(x)~=~\tilde\lambda\cdot
g_\lambda(x)\bigr\}\Big)+{\ve_\lambda\over4}\\\nonumber
&\leq&~\hbox{meas}\,(O_\lambda)+{\ve_\lambda\over4}~<~\hbox{meas}( E_\lambda)+{3\over4}\ve_\lambda\\\nonumber
 &<&~\hbox{meas}\Big(\bigl\{x\in K~|~ w(x)~=~\lambda\cdot
g(x)\bigr\}\Big)+{1\over4}\ve_\lambda+{3\over4}\ve_\lambda~=~\ve\,.
\end{eqnarray}
Repeating the above construction, for every $\lambda\in  [-N,N]^n$
there exists
 $r_\lambda>0$ so that the inequalities 
(\ref{wtl}) imply (\ref{nep}).
Since the set $[-N,N]^n$ is compact, we can select a finite family 
$\{\lambda^1,...,\lambda^M\} \subset [-N,N]^n$ such that the corresponding open balls 
$B\bigl(\lambda^k, r_{\lambda^k}\bigr)$ satisfy
$$
[-N,N]^n~\subset~\bigcup^M_{k=1}B\bigl(\lambda^k, r_{\lambda^k}\bigr)\,.
$$
Setting $r~\doteq~\min_{1\leq k\leq M} ~r_{\lambda^k}$, 
 for every $\tilde w\in B\bigl(w,r\bigr)$ and $\lambda\in [-N,N]^n$ we obtain
$$
\hbox{meas}\Big(\bigl\{x\in K~|~ \tilde w(x)~=~\lambda\cdot
g(x)\bigr\}\Big)~<~\ve\,.
$$
Therefore, $B\bigl(w,r\bigr)\subseteq \W^{g}_{\ve,N}$, proving that the set $\W^{g}_{\ve,N}$ is open in $\C(K,\,\R)$.
\quad\\
\quad\\
{\bf 2.} It remains to prove that each $\W^g_{\ve,N}$ is dense in $\C(K;\, \R)$.
Relying on Lusin's theorem, it is not restrictive to assume that 
$g$ is continuous. Given any $\eta>0$ and $\tilde w\in\C(K, \R)$, we will construct a function $w\in\W^g_{\ve,N}$, satisfying 
\begin{equation}\label{ball}
\|w-\tilde w\|_{\infty}~<~\eta\,.
\end{equation}
For simplicity,  without loss of generality we will assume that $K=[0,1]^d$. Let's choose an integer $m$ sufficiently large so that $m^d~\geq~n+1$ and $h~\doteq~{1\over m}$ satisfies
\begin{equation}\label{msmall}
h^d~<~{\ve\over 2n}
\end{equation}
and
\begin{equation}\label{ucont}
(x,x')\in K^2\,,\,|x-x'|~\leq~h\sqrt{d}\quad\Longrightarrow\quad\big|\tilde w(x)-\tilde w(x')\big|~<~{\eta\over2}\,.
\end{equation}
We adopt the following notation: a vector $y\in(\R^m)^d$ will be indexed by $y~=~(y_j)_{j\in\{0,\ldots,m-1\}^d}$. For every $\xi\in [0, h]^d\,$, $\lambda\in[-N,N]^n$ and $y\in(\R^m)^d$, define
$$
x_{j,\xi}~\doteq~ \xi + h\,j\,,\qquad\quad j\in\{0,\ldots,m-1\}^d
$$
and
\begin{equation}\label{Jxila}
J_{\lambda,\xi}(y)~\doteq~\left\{j\in\{0,\ldots,m-1\}^d~\Big|~y_j~=~\lambda\cdot g(x_{j,\xi})\right\}\,.
\end{equation}
We claim that the set
$$
Y(\xi)~\doteq~\left\{y\in(\R^m)^d~\Big|~\#\,J_{\lambda,\xi}(y)~\leq~n, ~~~\forall\,\lambda\in [-N,N]^n \right\}
$$
is dense in $(\R^m)^d$. Indeed, the complementary of $Y(\xi)$ is contained in the union of a finite family of proper hyperspaces: for every collection of indexes
$$J~=~\{j_1,\ldots,j_{n+1}\}~\subset~\{0,\ldots,m-1\}^d\,,$$
let us define the projection
$$
\Pi_J:(\R^m)^d\mapsto\R^{n+1}\,,\quad\Pi_J(y)~\doteq~(y_{j_1},\ldots,y_{j_{n+1}})\,,
$$
and the linear operator
$$
A_J:\R^n\mapsto\R^{n+1}\,,\quad A_J(\lambda)~\doteq~\left(\lambda\cdot g(x_{\xi,j_1}), \ldots,\lambda\cdot g(x_{\xi,j_{n+1}})\right)\,.
$$
Then
$$
(\R^m)^d\backslash Y(\xi)~\subset~\bigcup_{\{J\subset\{0,\ldots,m-1\}^d~|~\#J=n+1\}}\big\{y\in(\R^m)^d~|~\Pi_J(y)\in A_J(\R^n)\big\}\,.
$$
For any $\xi\in[0,h]^d$ and $j\in\{0,\ldots,m-1\}^d$, define
$$
\tilde y_j(\xi)~\doteq~\tilde w(x_{j,\xi})\,.
$$
By the density of $Y(\xi)$ in $(\R^m)^d$, we can find $y(\xi)\in Y(\xi)$ satisfying
\begin{equation}\label{yty}
\left|y_j(\xi)-\tilde y_j(\xi)\right|~<~{\eta\over2}\qquad\forall\,j\in\{0,\ldots,m-1\}^d\,.
\end{equation}
On the other hand, fixed any $\xi\in [0,h]^d $ and $\lambda \in [-N,N]^n$, there exist  $r_{\lambda},\delta_{\lambda}>0$ such that 
$$
\inf_{\lambda'\in B(\lambda,r_{\lambda})}~\left|y_j(\xi)-\lambda'\cdot g(x_{j,\xi})\right|~>~\delta_{\lambda}\qquad\forall j\in \{0,\ldots,m-1\}^d\backslash J_{\lambda,\xi}(y(\xi))\,.
$$
As in the previous step, let $\{\lambda^1,...,\lambda^M\} \subset [-N,N]^n$ be a finite family such that 
$$
[-N,N]^n~\subset~\bigcup^M_{k=1}B_n\Big(\lambda^k,r_{\lambda^k}\Big)\,.
$$
Set $\delta\doteq\min_{k\in\{1,2,...,M\}}\delta_{k}$. For any $\lambda \in [-N,N]^n$, there exists  an index $k\in\{1,\ldots,M\}$ such that 
$$
\left|y_j(\xi)-\lambda\cdot g(x_{j,\xi})\right|~>~\delta\qquad\forall j\in \{0,\ldots,m-1\}^d\backslash J_{\xi,\lambda^k}(y(\xi))\,.
$$
Thus, by the uniform continuity of $g$ and the uniformly bound of $\lambda$, there exists a neighborhood $\N(\xi)$ of $\xi$ (independent on $\lambda$) such that
$$
\left|y_j(\xi)-\lambda\cdot g(x_{j,\xi'})\right|~>~{\delta\over2}\qquad\forall j\in \{0,\ldots,m-1\}^d\backslash J_{\xi,\lambda^k}(y(\xi)), \xi'\in \N(\xi)\,.
$$
In particular, recalling (\ref{Jxila}), we obtain that
$$
J_{\xi',\lambda}(y(\xi))\subset J_{\xi,\lambda^k}(y(\xi))\qquad \forall\,\xi'\in\N(\xi)\,,
$$
and this yields
\begin{equation}\label{Nxi}
\#\,J_{\lambda,\xi'}(y(\xi))~\leq~n \qquad\forall\,\lambda\in [-N,N]^n,\ \forall\,\xi'\in\N(\xi)\,.
\end{equation}
Cover the set $[0,h[^d$ with finitely many disjoint neighborhoods $\{\N(\xi_k)\}_{k=1,\ldots,\ell}$ and define a piecewise constant function $w:[0,1[^d\,\mapsto \R$ by setting
$$w(x)~\doteq~y_j(\xi_k)\qquad\hbox{if}\qquad x \in \N(\xi_k) + h\,j\,,\quad k=1,\ldots,\ell\,,\quad j\in\{0,\ldots, m-1\}^d\,.$$
For any $x\in [0,1[^d$, let $k\in\{1,\ldots,\ell\}$ and $j\in\{0,\ldots, m-1\}^d$ be such that $x\in\N(\xi_k)+h\,j$. Then, $x$ and $x_{j,\xi_k}$ belong to $[0,h[^d+h\,j$. Recalling (\ref{ucont}) and (\ref{yty}), we have
\[
|w(x)-\tilde w(x)|~\leq~|y_j(\xi_k)-\tilde y_j(\xi_k)|~+~|\tilde w(x_{\xi_k,j})-\tilde w(x)|~<~\eta
\]
and it yields (\ref{ball}).
\quad\\
\quad\\
Moreover, by (\ref{msmall}), (\ref{Jxila}) and (\ref{Nxi}), we obtain
\begin{eqnarray*}
\hbox{meas}\,\Big(\big\{x\in K&|& w(x)=\lambda\cdot g(x)\big\}\Big)\\
&=&~\hbox{meas}\,\left(\bigcup_{j\in\{0,\ldots, m-1\}^d}\left\{x\in [0,h[^d+h\,j~|~ w(x)=\lambda\cdot g(x)\right\}\right)\\
&=&~\hbox{meas}\,\left(\bigcup_{j\in\{0,\ldots, m-1\}^d}\bigcup_{k=1}^\ell\left\{x\in \N(\xi_k)+h\,j~|~ y_j(\xi_k)=\lambda\cdot g(x)\right\}\right)\\
&\leq&~\sum_{k=1}^\ell\hbox{meas}\,\left(\bigcup_{j\in\{0,\ldots, m-1\}^d}\left\{\xi'\in \N(\xi_k)~|~ y_j(\xi_k)=\lambda\cdot g(x_{j,\xi'})\right\}\right)\\
&\leq&~\sum_{k=1}^\ell n\cdot\hbox{meas}\,\Big( \N(\xi_k)\Big)~=~n\,h^d~<~{\ve\over2}\,
\end{eqnarray*}
for every $\lambda\in[-N,N]^n$.
\quad\\
\quad\\
Finally, by Lusin's theorem, we then modify $w$ on a set of measure $<\ve/2$ and make it continuous
on the entire set $K$ and still satisfying (\ref{ball}). Then $w\in \W^g_{\ve,N}\cap B(\tilde w,\eta)$ and the set $\W^g_{\ve,N}$ is dense in $\C(K,\R)$.
\qed
\end{proof}
We are now going to prove our main theorem.
\quad\\
\quad\\
{\bf Proof of Theorem \ref{MT1}.} It is divided into 2 steps:
\quad\\
{\bf 1.} Fix $v=(v_1,\ldots,v_m)\in \C(K,\R^m)$ and let $\theta^*=(\theta^*_1,\ldots,\theta^*_m)$ be a solution of the optimization problem (\ref{P1})--(\ref{C1}). We claim that if $\theta^*$ is not extremal, then it is not the unique solution of (\ref{P1})--(\ref{C1}) and there exist two indexes $i_1\neq i_2\in\{1,\ldots,m\}$ and a Lagrange multiplier $\lambda
=(\lambda_1,\ldots,\lambda_n) \in\R^n$ satisfying
\begin{equation}\label{OC4}
\hbox{meas}\Big(\left\{x\in K~|~v_{i_1}(x)-v_{i_2}(x)=\lambda\cdot\big(f_{i_1}(x)-f_{i_2}(x)\big)\right\}\Big)~>~0\,.
\end{equation}
Indeed, if $\theta^*$ is non-extremal then the set 
\[
K_1~=~\left\{x\in K~|~0<\theta^*_i(x)<1~\mathrm{for~some}~i\in\{1,\dots,m\}\right\}
\]
has a positive Lebesgue measure. Since $\sum_{i}^{m}\theta^*(x)=1$ for all $x\in K$, we can deduce that  there exist two different indexes  $i_1, i_2\in\{1,\ldots,m\}$ such that
\[\hbox{meas}\big(\big\{x\in K~|~ 0<\theta^*_i(x)<1\,,\ \forall\,i\in\{i_1,i_2\}\big\}\big)~>~0\,.\]
Observe that 
\begin{eqnarray*}
\hbox{meas}\big(\big\{x\in K&|& 0<\theta^*_i(x)<1\,,\ \forall\,i\in\{i_1,i_2\}\big\}\big)\\
&=&~\hbox{meas}~\left(\bigcup_{n=3}^{+\infty}\left\{x\in K~\Big|~ {1\over n}<\theta^*_i(x)<1-{1\over n}\,,\ \forall\,i\in\{i_1,i_2\}\right\}\right)\,,
\end{eqnarray*}
there exists $n_0\geq 3$ such that the set 
\[
\tilde K~=~\left\{x\in K~\Big|~ {1\over n_0}<\theta^*_i(x)<1-{1\over n_0}\,,\ \forall\,i\in\{i_1,i_2\}\right\}
\]
has a positive Lebesgue measure. 
\quad\\
\quad\\
Consider the auxiliary optimization problem 
\begin{equation}\label{P4}
\displaystyle \mathrm{Minimize}_{\xi\in\mathcal{A}_0}~\int_{\tilde K}\xi(x)\big(v_{i_1}(x)-v_{i_2}(x)\big)~dx\,,
\end{equation}
where 
\begin{equation}\label{C4}
\mathcal{A}_0~\doteq~\left\{\xi\in\M(\tilde K,[-1,1])~\Big|~\int_{\tilde K}\xi(x)\big(f_{i_1}(x)-f_{i_2}(x)\big)~dx~=~0\right\}\,.
\end{equation}
Observe that $\xi^*\equiv 0$ is an optimal solution of (\ref{P4}) - (\ref{C4}). Indeed, for any $\xi\in\mathcal{A}_0$, define the mapping $\tilde{\theta}:K\mapsto\R^m$ by 
$$
\tilde{\theta}(x)~\doteq~\left\{\begin{array}{llll}\theta^*(x)+ {1\over n_0}\xi(x)\big({\bf{e}}_{i_1}-{\bf{e}}_{i_2}\big) & \ \ \hbox{if }x\in\tilde K\\
\theta^*(x) & \ \ \hbox{if }x\in K\setminus \tilde K\,,\end{array}\right.
$$
where $\{{\bf{e}}_1,\ldots,{\bf{e}}_m\}$ is the canonical basis of $\R^m$.
Clearly, $\tilde{\theta}$ belongs to $\mathcal{A}_{\alpha}$. Thus, 
$$
\int_{K}\tilde{\theta}(x)\cdot v(x)dx~\geq~\int_{K}\theta^*(x)\cdot v(x)dx
$$
and it implies that 
\begin{equation}\label{cd4}
\int_{\tilde K}\xi(x)\big(v_{i_1}(x)-v_{i_2}(x)\big)~dx~\geq~0\,.
\end{equation}
Now let's consider the vector subspace $Y$ of $\R^n$ generated by
$$
\left\{\int_{\tilde K}\xi(x)\big(f_{i_1}(x)-f_{i_2}(x)\big)~dx~\Big|~\xi\in\M(\tilde K,[-1,1])\right\}
$$
and define two convex subsets of $\R\times Y$
$$
A~\doteq~\left\{(a_0,0)\in\R\times Y~\big|~a_0<0\right\}\,,
$$
and $B$ the set of elements of the form
$$
(b_0,\bar{b})~=~\Big(\int_{\tilde K}\xi(x)\big(v_{i_1}(x)-v_{i_2}(x)\big)~dx\,,\,\int_{\tilde K}\xi(x)\big(f_{i_1}(x)-f_{i_2}(x)\big)~dx\Big)\,,
$$
with $ \xi$ varying in $\M(\tilde K,[-1,1])$.
Recalling (\ref{cd4}), one has that $A\cap B=\emptyset$.  Thanks to hyperplane separation theorem, there exists $(\lambda_0,\bar{\lambda})\in\big([0,+\infty)\times Y\big)\setminus\{(0,0)\}$ such that 
$$
\lambda_0 a_0~\leq~\lambda_0 b_0+\bar{\lambda}\cdot \bar{b}\qquad\forall \,a_0<0, (b_0,\bar{b})\in B\,.
$$
Observe that $\lambda_0\neq0$, otherwise we have 
$$
\bar\lambda\cdot \int_{\tilde K} \xi(x)\big(f_{i_1}(x)-f_{i_2}(x)\big)~dx~\geq~0\qquad\forall \,\xi\in\M(\tilde K,[-1,1])\,,
$$
that is impossible, since $0\neq\bar\lambda\in Y$. Setting $\lambda=-\bar\lambda/\lambda_0$, we obtain
$$
\int_{\tilde K}\xi(x)\big(v_{i_1}(x)-v_{i_2}(x)\big)~dx-\lambda\cdot \int_{\tilde K} \xi(x)\big(f_{i_1}(x)-f_{i_2}(x)\big)~dx~\geq~\lim_{a_0\to 0-}a_0~=~0
$$
for every $\xi\in\M(\tilde K,[-1,1])$.
This yields
$$
v_{i_1}(x)-v_{i_2}(x)~=~\lambda\cdot \big(f_{i_1}(x)-f_{i_2}(x)\big) \qquad \hbox{a.e.}~x\in \tilde K
$$
and consequently (\ref{OC4}).\\
\quad\\
In order to see that $\theta^*$ is not the unique solution of (\ref{P1})--(\ref{C1}), consider a function $\xi\in\mathcal{A}_0$ such that
\[
\hbox{meas}~\left(\left\{x\in \tilde K~\Big|~\xi(x)\neq 0\right\}\right)~>~0\,.
\]
Therefore, the following mappings
$$
\tilde{\theta}^{\pm}(x)~\doteq~\left\{\begin{array}{llll}\theta^*(x)\pm {1\over n_0}\xi(x)\big({\bf{e}}_{i_1}-{\bf{e}}_{i_2}\big) & \ \ \hbox{if }x\in\tilde K\\
\theta^*(x) & \ \ \hbox{if }x\in K\setminus \tilde K\end{array}\right.
$$
belong to $\mathcal{A}_{\alpha}$, satisfy $\tilde{\theta}^+\not\equiv\tilde{\theta}^-$ and
\[
\min~\left\{\int_{K}\tilde{\theta}^-(x)\cdot v(x)dx, \int_{K}\tilde{\theta}^+(x)\cdot v(x)dx\right\}~\leq~\int_{K}\theta^*(x)\cdot v(x)dx\,.
\]
\quad\\
{\bf 2.} Remark that if the problem (\ref{P1})--(\ref{C1}) admits two distinct solutions $\theta^*$ and $\theta^{**}$, then their convex combination
$${\tilde\theta}~\doteq~{\theta^*+\theta^{**}\over2}$$
is still a solution and it is not extremal. Therefore, by the previous step, $\mathcal{V}_{\alpha}$ contains the set of functions $v=(v_1,\ldots,v_m)\in\C(K,\R^m)$ satisfying
$$\hbox{meas}\,\Big(\big\{x\in K~|~v_{i_1}(x)-v_{i_2}(x)=\lambda\cdot \big(f_{i_1}(x)-f_{i_2}(x)\big)\big\}\Big)~=~0\quad\forall\,i_1\neq i_2,\,\lambda\in \R^n\,.$$
For any Lebesgue integrable function $g: K\to\R^n$, define $\mathcal{W}^{g}$ as in the statement of Lemma \ref{lemma}.
We then have 
\[
\mathcal{V}_{\alpha}~\supset~\bigcap_{i_1\neq i_2\in \{1,\ldots,m\}}~\left\{v=(v_1\ldots,v_m)\in\C(K,\R^m)~\Big|~v_{i_1}-v_{i_2}\in\W^{f_{i_1}-f_{i_2}}\right\}\,.
\]
By Lemma \ref{lemma}, the set $\mathcal{W}^{f_{i_1}-f_{i_2}}$ is residual in $\C(K,\,\R)$ for all $i_1\neq i_2 \in \{1,2,...,m\}$, i.e., there exists a family of open and dense subsets $\left\{\mathcal{W}^{f_{i_1}-f_{i_2}}_k\right\}_{k\in I\!\!N}$ of $\C(K,\R)$ satisfying
$$\bigcap_{k\in I\!\!N}\W^{f_{i_1}-f_{i_2}}_k~\subset~\W^{f_{i_1}-f_{i_2}}\,.$$
Hence we obtain
\begin{eqnarray*}
\mathcal{V}_{\alpha}&\supset&\bigcap_{i_1\neq i_2\in \{1,\ldots,m\}}\Big\{v\in\C(K,\R^m)~\Big|~v_{i_1}-v_{i_2}\in\bigcap_{k\in I\!\!N}\W_k^{f_{i_1}-f_{i_2}}\Big\}\cr\cr
&\supset&\bigcap_{i_1\neq i_2\in \{1,\ldots,m\}\,,\,k\in I\!\!N}\Big\{v\in\C(K,\R^m)~\Big|~v_{i_1}-v_{i_2}\in\W_k^{f_{i_1}-f_{i_2}}\Big\}\,.
\end{eqnarray*}
Moreover, it is not difficult to verify that the sets of the last intersection are open and dense. Therefore we can conclude that $\V_{\alpha}$ contains the intersection of countably many open dense subsets of $\C(K,\R^m)$, i.e. it is residual.
\qed
\quad\\
\quad\\

With similar techniques we can deal with a countable family of integrable functions. Let $(f_i)_{i=1}^\infty$ be a family of Lebesgue integrable functions from $K\subset \R^d$ to $\R^n$ satisfying
\begin{equation}\label{Hp}
\int_K \sup_{i}\|f_i(x)\|\, dx~<~\infty\,,
\end{equation}
where $\|\cdot\|$ is the norm in $\R^n$.
Let $(\bar\theta_i)_{i=1}^\infty$ be a family of measurable functions from $K$ to $[0,+\infty)$ such that
$$\sum_{i=1}^\infty\bar\theta_i(x)=1\qquad \forall\,x\in K\,.$$
We can consider $\bar \theta=(\bar\theta_i)_{i=1}^\infty$ as an element of the space ${\bf L}^\infty(K,\ell^\infty)$, where $\ell^\infty$ is the space of bounded real sequences. 
Call
$$\alpha~\doteq~\int_K\sum_{i=1}^\infty \bar\theta_i(x)\, f_i(x)\, dx\,.$$
Thanks to (\ref{Hp}) and dominated convergence, $\alpha\in\R^n$. Given $v\in \C(K,\ell^1)$, consider the problem
\begin{equation}\label{P3}
\displaystyle \mathrm{Minimize}_{\theta\in\mathcal{A}_{\alpha}}~\int_{K}\sum_{i=1}^\infty\theta_i(x)v_i(x)dx
\end{equation} 
over the set
\begin{eqnarray}\label{C3}
\mathcal{A}_{\alpha}~\doteq~\Big\{\theta\in {\bf L}^\infty(K,\ell^\infty)~&\Big|&~\theta_i(x)\geq0\ \ \forall\,i\in I\!\!N\,,\sum_{i=1}^\infty\theta_i(x)=1,\ a.e.\ x\in K,\qquad\nonumber\\
&&\qquad\quad\quad\qquad\int_{K}\sum_{i=1}^\infty\theta_i(x) f_i(x)~dx~=~\alpha\Big\}\,.
\end{eqnarray}
This problem admits at least a solution, since it is equivalent to 
$$\label{P3-1}
\displaystyle \mathrm{Minimize}_{\tilde{\theta}\in\mathcal{B}}~\int_{K}\sum_{i=1}^{\infty}\tilde{\theta}_i(x)\, \big(v_{i+1}(x)-v_1(x)\big)dx
$$
over the set
\begin{eqnarray*}\label{C3-1}
\mathcal{B}~\doteq~\Big\{\tilde{\theta}&\in&{\bf L}^{\infty}(K,\ell^\infty)~\Big|~\tilde{\theta}_i(x)\geq0\ \ \forall\,i\in I\!\!N\,,\ \sum_{i=1}^{\infty}\tilde{\theta}_i(x)\leq1,\ a.e.\ x\in K,\nonumber\\
&&\qquad\quad\qquad\sum_{i=1}^{\infty}\int_{K}\tilde{\theta}_i(x) \big(f_{i+1}(x)-f_1(x)\big)~dx~=~\alpha-\int_{K}f_1(x)~dx\Big\}
\end{eqnarray*}
and $\mathcal{B}$ is weakly*-compact in ${\bf L}^{\infty}(K,\ell^\infty)$.
\begin{theorem}\label{MT2}
Assume (\ref{Hp}). Then the set
\begin{equation}\label{W3}
\mathcal{V}_{\alpha}~\doteq~\left\{ v\in \C(K,\ell^1)~|~ (\ref{P3})-(\ref{C3})~has~a~unique~solution\right\}\,.
\end{equation}
 is residual in $\C(K,\ell^1)$. Moreover, for any $v\in\mathcal{V}_{\alpha}$, the unique optimal solution $\theta^*$ verifies $\theta^*_i(x)\in\{0,1\}$ for almost every $x\in K$ and every $i$.
\end{theorem}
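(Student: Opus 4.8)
The plan is to transcribe the two-step scheme used for Theorem~\ref{MT1}, now carried out in the Banach space $\C(K,\ell^1)$. Since this space is complete, the Baire category argument remains available. First I would record the well-posedness point: for any competitor $\theta\in\mathcal{A}_\alpha$ one has $\theta_i(x)\in[0,1]$ and $\sum_i\theta_i(x)=1$, so $\big|\sum_i\theta_i(x)v_i(x)\big|\le\sum_i|v_i(x)|=\|v(x)\|_{\ell^1}$, a continuous hence bounded function on the compact set $K$; thus the objective in (\ref{P3})--(\ref{C3}) is finite and, as already observed, a minimizer exists. The assumption (\ref{Hp}) enters only here and in making $\alpha$ well defined. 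The two genuinely new features are that the finite pigeonhole step of Theorem~\ref{MT1} must be replaced by a countable-union-of-null-sets argument, and that the final intersection now ranges over the countably many pairs $(i_1,i_2)$; both are harmless, as a countable intersection of residual sets is residual.

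Next I would fix $v\in\C(K,\ell^1)$ and a solution $\theta^*$ of (\ref{P3})--(\ref{C3}) and prove that, if $\theta^*$ is not extremal (i.e.\ if the set where some $\theta_i^*(x)\in(0,1)$ has positive measure), then $\theta^*$ is not unique and there exist $i_1\neq i_2$ and $\lambda\in\R^n$ with
\[
\hbox{meas}\Big(\big\{x\in K~|~v_{i_1}(x)-v_{i_2}(x)=\lambda\cdot(f_{i_1}(x)-f_{i_2}(x))\big\}\Big)~>~0\,.
\]
Because $\sum_i\theta_i^*(x)=1$, at any point where one component is fractional at least two must be, so the non-extremal set equals $\bigcup_{i_1\neq i_2}\{x:\,0<\theta_{i_1}^*(x),\theta_{i_2}^*(x)<1\}$; having positive measure and being a countable union, one pair $(i_1,i_2)$ yields a set of positive measure. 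Passing to a sublevel $\tilde K=\{x:\,1/n_0<\theta_{i_1}^*(x),\theta_{i_2}^*(x)<1-1/n_0\}$ of positive measure, the perturbation $\theta^*\pm\tfrac{1}{n_0}\xi(\mathbf{e}_{i_1}-\mathbf{e}_{i_2})$ with $\xi\in\M(\tilde K,[-1,1])$ and $\int_{\tilde K}\xi(f_{i_1}-f_{i_2})\,dx=0$ stays admissible in $\mathcal{A}_\alpha$: it alters only the two components, keeps them in $(0,1)$, preserves the pointwise sum, and leaves the integral constraint unchanged. The hyperplane-separation/Lagrange-multiplier argument then applies verbatim in the finite-dimensional space $\R\times Y$ with $Y\subset\R^n$, delivering both the displayed coincidence condition and a second distinct minimizer.

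Then I would assemble residuality. Using that any convex combination of two distinct minimizers is a non-extremal minimizer, the previous step gives
\[
\mathcal{V}_\alpha~\supseteq~\bigcap_{i_1\neq i_2}\Big\{v\in\C(K,\ell^1)~\Big|~v_{i_1}-v_{i_2}\in\W^{f_{i_1}-f_{i_2}}\Big\}\,.
\]
For each pair the map $\Phi_{i_1,i_2}:\C(K,\ell^1)\to\C(K,\R)$, $v\mapsto v_{i_1}-v_{i_2}$, is a bounded linear surjection admitting a bounded right inverse (send $w$ to the tuple with first relevant component $w$ and the rest $0$), hence open by the open mapping theorem. Writing $\W^{f_{i_1}-f_{i_2}}\supseteq\bigcap_k\W^{f_{i_1}-f_{i_2}}_k$ with each $\W^{f_{i_1}-f_{i_2}}_k$ open and dense (Lemma~\ref{lemma}), each preimage $\Phi_{i_1,i_2}^{-1}(\W^{f_{i_1}-f_{i_2}}_k)$ is open and dense in $\C(K,\ell^1)$. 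Intersecting over the countably many pairs and over $k$ exhibits $\mathcal{V}_\alpha$ as containing a countable intersection of open dense sets, so it is residual; and for $v$ in this intersection the minimizer is unique and extremal, i.e.\ $\theta^*_i(x)\in\{0,1\}$ for a.e.\ $x$ and every $i$.

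I expect the only delicate point to be the passage from finite to countable: checking that a positive-measure non-extremal set cannot be covered by countably many null pair-sets (so a good pair is still located), and that openness of the difference operator $\Phi_{i_1,i_2}$ survives in the $\ell^1$ setting so that preimages of open dense sets stay open dense. The local perturbation, the finite-dimensional separation argument, and the well-posedness under (\ref{Hp}) all transfer directly from the proof of Theorem~\ref{MT1}.
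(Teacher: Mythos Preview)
Your proposal is correct and follows essentially the same two-step scheme as the paper's own proof: locate a pair $(i_1,i_2)$ and a scale $n_0$ via a countable union, run the finite-dimensional separation on the auxiliary problem (\ref{P4})--(\ref{C4}) to obtain (\ref{OC5}) and non-uniqueness, and then intersect over all pairs and over the open dense sets $\W_k^{f_{i_1}-f_{i_2}}$ supplied by Lemma~\ref{lemma}. The one place you are more explicit than the paper is the final step, where you justify that $\{v\in\C(K,\ell^1):v_{i_1}-v_{i_2}\in\W_k^{f_{i_1}-f_{i_2}}\}$ is open and dense via the continuous open surjection $\Phi_{i_1,i_2}$; the paper simply records the inclusion and concludes.
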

\begin{proof}
The proof is similar to the one of Theorem \ref{MT1}.
Fix $v\in \C(K,\ell^1)$ and let $\theta^*\in {\bf L}^\infty(K,\ell^\infty)$ be a solution of the optimization problem (\ref{P3})--(\ref{C3}). If $\theta^*$ does not verify $\theta^*_i(x)\in\{0,1\}$ for almost every $x\in K$ and every $i$, then it is possible to show as above that $\theta^*$ is not the unique solution of (\ref{P3})--(\ref{C3}). We claim that there exist two indexes $i_1\neq i_2$ and $\lambda
=(\lambda_1,\ldots,\lambda_n) \in\R^n$ satisfying
\begin{equation}\label{OC5}
\hbox{meas}\Big(\left\{x\in K~|~v_{i_1}(x)-v_{i_2}(x)=\lambda\cdot\big(f_{i_1}(x)-f_{i_2}(x)\big)\right\}\Big)~>~0\,.
\end{equation}
Indeed, if $\theta^*$ is non-extremal, we have
\begin{eqnarray*}
0~&<&~\hbox{meas}\big(\big\{x\in K~|~0<\theta^*_i(x)<1~\mathrm{for~some}~i\big\}\\
&=&~\hbox{meas}~\left(\bigcup_{I\in I\!\!N}\bigcup_{n=3}^{+\infty}\left\{x\in K~\Big|~ {1\over n}<\theta^*_i(x)<1-{1\over n}\,,\ \forall\,i\in\{i_1,i_2\},\,\mathrm{some }~i_1\neq i_2\leq I\right\}\right)\,.
\end{eqnarray*}
Consequently, there exist $i_1\neq i_2$ and $n_0\geq 3$ such that the set 
\[
\tilde K~=~\left\{x\in K~\Big|~ {1\over n_0}<\theta^*_i(x)<1-{1\over n_0}\,,\ \forall\,i\in\{i_1,i_2\}\right\}
\]
has a positive Lebesgue measure. As in the proof of Theorem \ref{MT1}, one can verify that $\xi^*\equiv 0$ is an optimal solution of the auxiliary problem (\ref{P4}) - (\ref{C4}) and that it satisfies the necessary condition (\ref{OC5}) for some Lagrange multiplier $\lambda
=(\lambda_1,\ldots,\lambda_n) \in\R^n$.
Therefore, if we denote by $\mathcal{W}^{f_{i_1}-f_{i_2}}$ is the set of functions $w\in \C(K,\R)$ such that
$$
\hbox{meas}\Big(\bigl\{x\in K~|~ w(x)=\lambda\cdot (f_{i_1}(x)-f_{i_2}(x)) \bigr\}\Big)~=~0\qquad\hbox{for all}~~\lambda\in\R^n\,,
$$
we obtain
\[
\mathcal{V}_{\alpha}~\supset~\bigcap_{i_1\neq i_2}~\left\{v\in \C(K,\ell^1)~\Big|~v_{i_1}-v_{i_2}\in\W^{f_{i_1}-f_{i_2}}\right\}\,.
\]
By Lemma \ref{lemma}, for all $i_1\neq i_2$ the set $\mathcal{W}^{f_{i_1}-f_{i_2}}$ is residual in $\C(K,\R)$, i.e., there exists a family of open and dense subsets $\left\{\mathcal{W}^{f_{i_1}-f_{i_2}}_k\right\}_{k\in I\!\!N}$ of $\C(K,\R)$ satisfying
$$\bigcap_{k\in I\!\!N}\W^{f_{i_1}-f_{i_2}}_k~\subset~\W^{f_{i_1}-f_{i_2}}\,.$$
Hence we obtain
\begin{eqnarray*}
\mathcal{V}_{\alpha}&\supset&\bigcap_{i_1\neq i_2}\Big\{v\in \C(K,\ell^1)~\Big|~v_{i_1}-v_{i_2}\in\bigcap_{k\in I\!\!N}\W_k^{f_{i_1}-f_{i_2}}\Big\}\cr\cr
&\supset&\bigcap_{i_1\neq i_2\,,\,k\in I\!\!N}\Big\{v\in \C(K,\ell^1)~\Big|~v_{i_1}-v_{i_2}\in\W_k^{f_{i_1}-f_{i_2}}\Big\}\,.
\end{eqnarray*}
Consequently, $\V_\alpha$ is residual in $\C(K,\ell^1)$.
\qed
\end{proof}

{\bf Acknowledgments.} This work was partially supported by a grant from the Simons Foundation/SFARI (521811, NTK).

\end{document}